\documentclass[11pt,a4paper]{amsart}
\usepackage[T1]{fontenc}
\usepackage[utf8]{inputenc}
\usepackage{lmodern,mathtools,amssymb}
\usepackage[margin=30mm]{geometry}
\usepackage{microtype}
\usepackage[hidelinks]{hyperref}

\newtheorem{theorem}{Theorem}[section]
\newtheorem{proposition}[theorem]{Proposition}
\newtheorem{lemma}[theorem]{Lemma}

\theoremstyle{remark}
\newtheorem{remark}[theorem]{Remark}
\numberwithin{equation}{section}
\newcommand{\N}{\mathbb N}
\newcommand{\F}{\mathbb F}
\newcommand{\one}{\mathbf 1}
\newcommand{\norm}[1]{\left\lVert#1\right\rVert}
\newcommand{\nt}[1]{\norm{#1}_{\theta}}

\DeclareMathOperator{\spn}{span}

\title[Semi-greedy Markushevich bases]{A semi-greedy Markushevich basis which is not quasi-greedy}
\author[P. M. Bern\'a]{Pablo M. Bern\'a}
\address{Universidad Tecnológica Atlántico Mediterráneo-UTAMED, Facultad de Empresa Digital, Tecnología y Derecho, Málaga, España}
\email{pablomanuel.berna@utamed.es}
\subjclass[2020]{41A65, 46B15}
\keywords{Semi-greedy basis, almost greedy basis, Markushevich basis,
quasi-Banach space, Schauder basis}
\date{}
\thanks{The author was supported by Grant PID2022-142202NB-I00 (Agencia Estatal de Investigación, Spain).}

\begin{document}
\begin{abstract}
We construct a quasi-Banach space with a semi-greedy Markushevich basis which
is not quasi-greedy, and hence not almost greedy. This gives a negative answer
to the question of whether the equivalence between semi-greediness and almost
greediness for Markushevich bases in Banach spaces extends to the general
quasi-Banach setting.
\end{abstract}
\maketitle

\section{Introduction}

The thresholding greedy algorithm is based on a simple idea: given a vector,
we keep the coordinates with largest modulus and remove the others. This
procedure is easy to define and cheap to implement, but how well it works
depends strongly on the basis and on the geometry of the space. This leads to
several notions which measure different levels of performance of the greedy
algorithm, such as quasi-greedy, almost greedy and semi-greedy bases. We refer
to \cite{AABW} for the general theory of greedy approximation for
biorthogonal systems in quasi-Banach spaces.

There is a well-known connection between almost greediness and
semi-greediness. The first one was given in \cite{DKK} for Schauder bases in Banach spaces with finite cotype. For Schauder bases in general Banach spaces, the two notions are
equivalent; see \cite{Berna}. Later, Berasategui and Lassalle proved the same
equivalence for Markushevich bases in Banach spaces \cite{BL}. For Schauder
bases, the equivalence is also known in quasi-Banach spaces. Thus the natural
remaining question is whether the Schauder assumption can be removed in the
quasi-Banach setting. This question was explicitly raised in \cite{GarciaReview}.
It is less formal than it may first seem. Outside the locally convex setting,
some standard arguments involving partial sum operators, the dual space or
finite-dimensional minimization need extra care. Recent counterexamples also
show that greedy approximation may behave in a genuinely different way in
quasi-Banach spaces; see \cite{AABB}.

The aim of this paper is to answer this question in the negative. More
precisely, the Banach-space equivalence above does not survive for general
Markushevich bases in quasi-Banach spaces. We build a
quasi-Banach sequence space with a normalized, $M$-bounded Markushevich basis
which is semi-greedy but not quasi-greedy. Since almost greedy bases are
quasi-greedy, our basis is not almost greedy either. Thus, in this setting,
being able to obtain uniformly good Chebyshev approximations on greedy
supports does not imply that the thresholding greedy algorithm itself is
stable.

Let us explain where the space comes from. This is useful because the example
is not obtained by modifying a standard sequence space in an obvious way.
There are two properties that we want at the same time. First, for
semi-greediness we would like to move coefficients from a competing support
to a greedy support without paying too much in the quasi-norm. Second, to
break quasi-greediness we need some greedy projections to be much larger than
the vectors from which they come. So we look for a functional which is small,
or even zero, on balanced vectors of the form
\[
        \sum_{j=1}^r a_j(e_{n_j}-e_{k_j}),
\]
but becomes large on positive constant-coefficient vectors.

A convenient way to obtain these two behaviours is to use the defect of
additivity of the logarithmic map. More precisely, for $t>0$ we consider
\[
        h_t(z)=z\log\max\{|z|,t\}
\]
and, for $x\in\ell_1$, we set
\[
 F_t(x)=\sum_n h_t(x_n)-h_t\Big(\sum_nx_n\Big),
 \qquad H(x)=\sup_{t>0}|F_t(x)|.
\]
The truncation avoids the singular behaviour of the logarithm at zero, while
the correction involving the total sum makes the defect behave well under
scaling. The important point for us is that $H$ vanishes on disjoint pairs of
opposite coordinates, whereas it grows logarithmically on positive blocks.
This suggests defining
\[
        \|x\|_\theta=\|x\|_1+\theta H(x).
\]
The $\ell_1$ part keeps perturbations under control, and the logarithmic part
creates the gap needed to destroy quasi-greediness.

The basic computation already shows why this choice works. If $A$ and $B$ are
disjoint and $|A|=|B|=m$, then
\[
 \|\sum_{n\in A}e_n\|_\theta=m(1+\theta\log m),
 \qquad
 \|\sum_{n\in A}e_n-\sum_{j\in B}e_j\|_\theta=2m.
\]
Thus a positive block contains an extra logarithmic term, while a balanced
block does not. The semi-greedy proof uses exactly the cheap balanced
transfers in the second formula. The first formula, on the other hand,
produces greedy projections whose norms are arbitrarily large compared with
the original vectors. In this sense, the same feature of the space is
responsible for both sides of the example.

Our main result is the following.

\begin{theorem}\label{thm:main}
There exists a nonlocally convex quasi-Banach sequence space
$X\subset\ell_1$ with a normalized, $M$-bounded Markushevich basis
$(e_n)_{n\in\N}$ having the following properties:
\begin{enumerate}
\item The basis is semi-greedy, but is neither quasi-greedy nor almost greedy.
\item No ordering of $(e_n)$ is a Schauder basis of $X$.
\item The quasi-norm is invariant under permutations of the coordinates.
\item For every $0<p<1$, the space $X$ admits an equivalent $p$-norm. With
respect to that $p$-norm, the same system is a seminormalized, $M$-bounded,
semi-greedy Markushevich basis which is not quasi-greedy.
\end{enumerate}
More precisely, $X$ carries equivalent quasi-norms
$(\|\cdot\|_\theta)_{\theta>0}$ for which
\[
 \|e_n\|_\theta=\|e_n^*\|=1,
 \qquad C_{\mathrm{sg}}\le3+\frac{18\theta}{\mathrm e},
\]
where $C_{\mathrm{sg}}$ is the semi-greedy constant.
\end{theorem}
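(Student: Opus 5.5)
Our plan is to take $X$ to be the completion of $c_{00}$ (equivalently, the subset of $\ell_1$ where $H$ is finite and continuous in the appropriate sense) under $\nt{\cdot}=\norm{\cdot}_1+\theta H$, and to verify the properties in order: quasi-norm, basis properties, failure of quasi-greediness, semi-greediness.

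\emph{Quasi-norm and basis.} First we record elementary properties of $h_t$. We have $|h_t(z)|\le |z|\,|\log\max\{|z|,t\}|$, and the scaling identity $F_t(\lambda x)=\lambda F_{t/|\lambda|}(x)$ holds because the $\log|\lambda|$ terms cancel thanks to the correction by the total sum. Taking the supremum over $t$ gives $H(\lambda x)=|\lambda|H(x)$. For the quasi-triangle inequality, the natural tool is the Kalton--Ribe type estimate for the defect of $z\log|z|$: for reals $|h(a+b)-h(a)-h(b)|\le C(|a|+|b|)$ (in fact $\le 2\log 2\,(|a|+|b|)$ in the untruncated case, and the truncation at $t$ only improves this). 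Writing $F_t(x+y)-F_t(x)-F_t(y)=h_t(\sum x)+h_t(\sum y)-h_t(\sum x+\sum y)+\sum_n[h_t(x_n+y_n)-h_t(x_n)-h_t(y_n)]$, we obtain $H(x+y)\le H(x)+H(y)+C(\norm{x}_1+\norm{y}_1)$. Hence $\nt{x+y}\le(1+C\theta)(\nt x+\nt y)$. Different $\theta$ give equivalent quasi-norms by the same estimate. Permutation invariance (3) is evident from the definition. Coordinates satisfy $|x_n|\le\norm{x}_1\le\nt x$, and $F_t(e_n)=0$, so $\nt{e_n}=\norm{e_n^*}=1$. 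Density of $c_{00}$ and totality of $(e_n^*)$ then give an $M$-bounded Markushevich basis. Non-local convexity follows from the block computation $\nt{\one_A}=m(1+\theta\log m)$ against the averages $\one_A=\sum_{n\in A}e_n$ of norm-$1$ vectors. For (4) we invoke Aoki--Rolewicz. The $p$-norm is equivalent, so seminormalization, $M$-boundedness, semi-greediness and non-quasi-greediness all transfer, since each is invariant under equivalent renorming up to constants.

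\emph{Not quasi-greedy; no Schauder ordering.} Take $x=\sum_{j\le m}(1+\varepsilon)e_{n_j}-\sum_{j\le m}e_{k_j}$ with small $\varepsilon$. Then $\nt x\approx 2m$ by the balanced computation, made robust by the quasi-triangle estimate with an $\varepsilon m$ error. Its greedy projection of order $m$ is $(1+\varepsilon)\one_A$, which has norm $\approx m\log m$. So the quasi-greedy constant is unbounded, and therefore the basis is not almost greedy. For (2), given any ordering, we interleave signs so that some partial sum of a balanced vector is a long positive block. Since $\nt{\one_A}/\nt{\one_A-\one_B}\sim\log m$ is unbounded, the partial sum projections cannot be uniformly bounded.

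\emph{Semi-greedy (main obstacle).} Given $x$ and a greedy set $A$ with $|A|=m$, we must find $y$ supported on $A$ with $\nt{x-y}\le C_{\mathrm{sg}}\,\nt{x-z}$ for every $z$ supported on $B$ with $|B|= m$. The plan is to define $y$ by transferring the coefficients of $x-z$ on $B\setminus A$ to positions of $A\setminus B$ via a bijection $\pi\colon B\setminus A\to A\setminus B$. Concretely, set $w=x-z$ and let $x-y$ equal $w$ off $A\cup B$, equal $w_b$ at $\pi(b)$, and vanish on $B\setminus A$; on $A\cap B$, take $x-y=w$. The $\ell_1$ part is unchanged except for replacing $|x_a|$ on $A\setminus B$ by $|w_b|$, and $|w_b|=|x_b|\le\min_A|x_a|\le\|w\|_\infty$ controls this. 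The hard step is $H$: we must compare $F_t(x-y)$ with $F_t(w)$ uniformly in $t$. The difference is $\sum_{a\in A\setminus B}[h_t(w_{\pi^{-1}a})-h_t(x_a)]$ plus the change in $h_t$ of the total sum, since the total sum changes by $\sum_{a\in A\setminus B}(w_{\pi^{-1}(a)}-x_a)$. We would bound each term by using $\sup_{t}|h_t(u)-u\log t|$-type estimates. In particular, $|h_t(u)|\le |u|\log^+(|u|/t)+|u|\,|\log t|$ after recentring by the scaling invariance, normalizing $t$ relative to $\norm{w}_\infty$. Here $\sup_{s\in(0,1]} s\log(1/s)=1/\mathrm e$ enters, which is where the constant $18\theta/\mathrm e$ should arise. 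The total-sum term is handled by the same defect estimate, with $\ell_1$ control bounded by $3\norm{w}_1$. Getting these estimates uniform in $t$, without any partial-sum operators, is the step we expect to require the most care.
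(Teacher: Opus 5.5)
\textbf{There is a genuine gap in the semi-greedy part, which is the core of the theorem.}

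You transport the entries of the residual $w=x-z$. But for any $y\in E_A$ one has $(x-y)_b=x_b$ for every $b\in B\setminus A$, so $x-y$ cannot ``vanish on $B\setminus A$''. Also, $|w_b|=|x_b|$ is false, since $w_b=x_b-z_b$.

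The consistent version of your recipe genuinely fails. In that version $x-y$ equals $x$ off $A$, equals $w$ on $A\cap B$, and equals $w_b$ at $\pi(b)$. Take $A=A_1\cup A_2$ with $|A_1|=|A_2|=k$, and $B$ disjoint from $A$ with $|B|=2k$. Let $x=(1+\varepsilon)(\one_{A_1}-\one_{A_2})+\one_B$ and $z=\one_B$. Then $A$ is the unique greedy set of order $2k$. The residual $w=(1+\varepsilon)(\one_{A_1}-\one_{A_2})$ satisfies $\nt{w}=2k(1+\varepsilon)$, because $h_t$ is odd and $S(w)=0$. However, $w$ vanishes on $B$, so your $y$ is exactly $P_Ax$. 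This gives $\nt{x-y}=\nt{\one_B}=2k(1+\theta\log 2k)$.

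So no estimate of $F_t(x-y)-F_t(w)$ can close the argument. Independently, the termwise quantities $\sup_t|h_t(u)-u\log t|=\sup_t|u|\log^+(|u|/t)$ that you propose to use are infinite.

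\textbf{The missing idea} is to transport the \emph{competitor's coefficients} rather than the residual. Set
$y=\sum_{j\in A\cap B}z_je_j+\sum_{b\in B\setminus A}z_be_{\pi(b)}\in E_A$. Then $x-y=w+d$ with $d=\sum_{b\in B\setminus A}z_b(e_b-e_{\pi(b)})$. This $d$ is a sum of disjoint opposite pairs, so $F_t(d)=0$ for every $t$ and $\nt{d}=\norm{d}_1$. Greediness and $\pi(b)\notin B$ give $|z_b|\le|x_b|+|w_b|\le|w_{\pi(b)}|+|w_b|$, hence $\norm{d}_1\le2\norm{w}_1$. The refined inequality $\nt{w+d}\le\nt{w}+\nt{d}+\theta L(\norm{w}_1+\norm{d}_1)$ then gives $\nt{x-y}\le3(1+\theta L)\nt{w}$. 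Uniformity in $t$ comes for free from the defect inequality, with no $t$-by-$t$ analysis.

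The stated constant $3+18\theta/\mathrm e$ requires the scalar defect bound $3/\mathrm e$, which also holds for complex scalars, so that $L=6/\mathrm e$. Your $2\log2$ is asserted only for real scalars and would give a worse constant.

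\textbf{Smaller omissions:}
\begin{enumerate}
\item If $X$ is the abstract completion of $c_{00}$, you must show that the $e_n^*$ are total, i.e.\ that $X$ injects into $\ell_1$. This uses the $\ell_1$-lower semicontinuity of $H$, which follows from $\ell_1$-continuity of each $F_t$.
\item For (4), Aoki--Rolewicz applied to a fixed $\nt{\cdot}$ only yields $p$ with $2^{1/p-1}=1+\theta L$. To reach every $p<1$ you must take $\theta$ small and use that all the $\nt{\cdot}$ are equivalent.
\item In the non-quasi-greedy example, $\nt{x}\approx 2m$ needs $\varepsilon\log m$ bounded, for instance $\varepsilon=1/m$. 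The total sum $S=\varepsilon m$ contributes about $\theta\varepsilon m\log m$ to $\nt{x}$.
\item For (2), you need the Baire-category uniform boundedness principle in the quasi-Banach setting to pass from unbounded partial sum projections to ``not Schauder''.
\end{enumerate}
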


The paper is organized as follows. In Section~\ref{sec:preliminaries} we
recall the basic terminology on Markushevich bases and greedy approximation,
with particular attention to quasi-greedy, almost greedy and semi-greedy
systems. Section~\ref{sec:logspace} contains the construction of the space.
There we introduce the logarithmic defect functional and prove the estimates
that explain the two different behaviours needed in the example. In
Section~\ref{sec:semigreedy} we use the cheap balanced transfers built into
the quasi-norm to construct good Chebyshev approximants on greedy supports
and prove semi-greediness. Section~\ref{sec:failure} deals with the opposite
side of the construction: suitable constant-coefficient vectors show that
the canonical basis is not quasi-greedy. We also prove there that no
reordering of the canonical system is a Schauder basis and explain how to
pass to an equivalent $p$-norm for every $0<p<1$. Finally,
Section~\ref{sec:further} contains two additional remarks on coordinate tails
and on the canonical embedding into the bidual, which help to illustrate why
the usual Banach-space picture breaks down in this example.

\section{Preliminaries and notation}\label{sec:preliminaries}

Throughout the paper, $\F$ denotes either $\mathbb R$ or $\mathbb C$. We recall the definitions that will be used below. 

\subsection{Quasi-Banach spaces and biorthogonal systems}

A \emph{quasi-norm} on a vector space $X$ is a positive definite, absolutely
homogeneous map $\|\cdot\|:X\to[0,\infty)$ for which there is $\kappa\ge1$
such that
\[
 \|x+y\|\le \kappa(\|x\|+\|y\|),\qquad x,y\in X.
\]
A complete quasi-normed space is called a \emph{quasi-Banach space}. If
$0<p\le1$ and
\[
        \|x+y\|^p\le \|x\|^p+\|y\|^p,
\]
we say that $\|\cdot\|$ is a $p$-norm and that $X$ is a $p$-Banach space when
it is complete. We shall use the Aoki--Rolewicz theorem in the standard form
that every quasi-Banach space admits an equivalent $p$-norm for some
$0<p\le1$; a quantitative version will be used later.

A sequence $(e_n,e_n^*)_{n\in\N}\subset X\times X^*$ is a
\emph{biorthogonal system} if
\[
             e_n^*(e_k)=\delta_{n,k}\qquad(n,k\in\N).
\]
It is \emph{fundamental} if $\spn\{e_n:n\in\N\}$ is dense in $X$, and
\emph{total} if the functionals $(e_n^*)$ separate the points of $X$. A
fundamental and total biorthogonal system is called a \emph{Markushevich
basis}, or an \emph{$M$-basis}. Notice that this notion does not require the
series $\sum_ne_n^*(x)e_n$ to converge to $x$ in the given order. This is the
main distinction from a Schauder basis and is relevant for the example below.

We say that $(e_n)$ is \emph{seminormalized} if
\[
 0<\inf_n\|e_n\|\le \sup_n\|e_n\|<\infty,
\]
and that the biorthogonal system is \emph{$M$-bounded} if
\[
             \sup_n \|e_n\|\,\|e_n^*\|<\infty.
\]
For a seminormalized $M$-bounded system the coordinate functionals are
uniformly bounded. In particular, if the system is fundamental, then
$e_n^*(x)\to0$ for every $x\in X$: one first approximates $x$ by a finite
linear combination of the $e_n$ and then uses the uniform bound on the
coordinate functionals.

For a finite set $A\subset\N$ we write
\[
 E_A=\spn\{e_n:n\in A\},\qquad
 P_A(x)=\sum_{n\in A}e_n^*(x)e_n.
\]
When $A=\{1,\ldots,m\}$, $P_A$ is the usual $m$th partial sum operator for
the given ordering. A Markushevich basis is a \emph{Schauder basis} in that
ordering if $P_{\{1,\ldots,m\}}x\to x$ for every $x\in X$.

\subsection{Greedy sets and the thresholding greedy algorithm}

Let $x\in X$ and $m\in\N$. A set $A\subset\N$ with $|A|=m$ is a
\emph{greedy set of order $m$ for $x$} if
\begin{equation}\label{eq:greedy-set}
 \min_{n\in A}|e_n^*(x)|\ge
 \sup_{n\notin A}|e_n^*(x)|.
\end{equation}

Given a greedy set $A$ of $x$, the vector
\[
             G_A(x)=P_A(x)
\]
is a \emph{thresholding greedy approximant} of order $m$. Equivalently, one
may fix a greedy ordering $\rho:\N\to\N$ for $x$, satisfying
$|e_{\rho(1)}^*(x)|\ge |e_{\rho(2)}^*(x)|\ge\cdots$, and write
$G_m(x)=\sum_{j=1}^m e_{\rho(j)}^*(x)e_{\rho(j)}$. 
The system is \emph{quasi-greedy} if there is $C_q\ge1$ such that
\begin{equation}\label{eq:qg-def}
        \|x-P_Ax\|\le C_q\|x\|
\end{equation}
for every $x\in X$ and every greedy set $A$ of $x$. By the quasi-triangle
inequality, this residual formulation is equivalent, up to the constants, to
uniform boundedness of the greedy approximants $P_A(x)$ by $\|x\|$. Thus
quasi-greediness expresses stability of the thresholding greedy algorithm.

\subsection{Best $m$-term approximation and almost greedy bases}

For $m\in\N$ and $x\in X$, define
\begin{align}
 \sigma_m(x)
 &=\inf_{\substack{B\subset\N\text{ finite}\\ |B|\le m}}
      \inf_{z\in E_B}\|x-z\|, \label{eq:sigma}\\
 \widetilde\sigma_m(x)
 &=\inf_{\substack{B\subset\N\text{ finite}\\ |B|\le m}}
      \|x-P_B(x)\|. \label{eq:sigmatilde}
\end{align}
The first quantity is the error of best $m$-term approximation with arbitrary
coefficients, whereas the second only allows coordinate projections of $x$.
Clearly $\sigma_m(x)\le\widetilde\sigma_m(x)\le\|x\|$.

A basis is \emph{almost greedy} if there exists $C_a\ge1$ such that, for
every $x$, every $m$, and every greedy set $A$ of $x$ with $|A|=m$,
\begin{equation}\label{eq:ag-def}
       \|x-P_A(x)\|\le C_a\widetilde\sigma_m(x).
\end{equation}
Since the empty set is an admissible competitor in
\eqref{eq:sigmatilde}, almost greediness implies quasi-greediness.

\subsection{Chebyshev greedy approximation and semi-greedy bases}

The thresholding greedy algorithm fixes both the support and the coefficients
of the approximant. The Chebyshev greedy algorithm keeps the greedy support
but then chooses the coefficients optimally on that support. If $A$ is a
finite subset of $\N$, set
\begin{equation}\label{eq:cheb-error}
 \mathcal E_A(x)=\inf_{z\in E_A}\|x-z\|
 =\inf_{(a_n)_{n\in A}\in\F^A}
       \left\|x-\sum_{n\in A}a_ne_n\right\|.
\end{equation}
Whenever the infimum is attained, any minimizer will be denoted by
$\mathcal C_A(x)$ and called a \emph{Chebyshev greedy approximant} on $A$.
Uniqueness is neither assumed nor needed.

We say that the system is \emph{semi-greedy} if there is
$C_{\mathrm{sg}}\ge1$ such that
\begin{equation}\label{eq:sg-definition}
       \mathcal E_A(x)\le C_{\mathrm{sg}}\sigma_m(x),
       \qquad |A|=m,
\end{equation}
for every $x\in X$ and every greedy set $A$ of $x$. $C_{\mathrm{sg}}$ is called the semi-greedy constant.

In a normed space the
finite-dimensional minimization involved in \eqref{eq:cheb-error} is
standard. For a general quasi-norm, continuity should not be inserted
silently into the argument. In our example the relevant quasi-norm is lower
semicontinuous with respect to the ambient $\ell_1$ norm, and this will be
enough to prove directly that the infimum is attained on every finite set.

The distinction between \eqref{eq:qg-def} and \eqref{eq:sg-definition} is
central here. Quasi-greediness asks the original coefficients selected by the
greedy algorithm to be stable. Semi-greediness permits those coefficients to
be changed after the support has been selected. Our construction is designed
so that this freedom is decisive: a competitor supported away from a greedy
set can be transported to that set by adding balanced pairs, which are cheap
for the quasi-norm, even though the original greedy projection itself may be
very large.

In what follows, a \emph{sequence space} means a vector subspace of
$\F^{\N}$ with continuous coordinate functionals. 

\section{The logarithmic quasi-Banach space}\label{sec:logspace}

We now implement the idea described in the introduction. The logarithm is
useful because its failure of additivity is scale invariant after the linear
part is removed. The truncation at level $t$ prevents singular behaviour near
zero and, more importantly, allows us to take a supremum over all scales.
The correction term involving the total sum $S(x)$ makes the resulting
defect homogeneous. Finally, oddness of the scalar map forces exact
cancellation on opposite pairs of coordinates. These are precisely the three
features needed later: homogeneity, an $\ell_1$ control of the additive
defect, and zero cost for balanced transfers between supports.

All logarithms are natural. For $t>0$ and $z\in\F$, set
\[
 h_t(z)=z\log\max\{|z|,t\},\qquad h_t(0)=0.
\]
For $x=(x_n)\in\ell_1$, define
\[
 S(x)=\sum_{n=1}^{\infty}x_n,\qquad
 F_t(x)=\sum_{n=1}^{\infty}h_t(x_n)-h_t(S(x)),\qquad
 H(x)=\sup_{t>0}|F_t(x)|.
\]
For each fixed $t>0$, the series defining $F_t(x)$ converges
absolutely, since $|x_n|\le\norm{x}_1$ and the factors
$\log\max\{|x_n|,t\}$ are bounded uniformly in $n$. The value
$H(x)=+\infty$ is allowed at this stage.

\begin{lemma}\label{lem:defect}
For all $u,v\in\F$ and $t>0$,
\begin{equation}\label{eq:scalar-defect}
 |h_t(u+v)-h_t(u)-h_t(v)|
       \le\frac3{\mathrm e}(|u|+|v|).
\end{equation}
Consequently, with $L=6/\mathrm e$, for all $x,y\in\ell_1$,
\begin{equation}\label{eq:defect}
 |F_t(x+y)-F_t(x)-F_t(y)|
       \le L(\norm{x}_1+\norm{y}_1).
\end{equation}
\end{lemma}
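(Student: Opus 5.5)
The plan is to reduce \eqref{eq:scalar-defect} to the case $|u|+|v|=1$ by a scaling identity, and then bound each of the three terms separately by $1/\mathrm e$. The inequality \eqref{eq:defect} will then follow by applying the scalar estimate coordinatewise and once more to the total sums.

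\emph{Scaling.} For $\lambda>0$ we have
\[
h_t(\lambda z)=\lambda z\log\max\{\lambda|z|,t\}
=\lambda h_{t/\lambda}(z)+\lambda z\log\lambda .
\]
The term $\lambda z\log\lambda$ is linear in $z$. It therefore cancels in the combination $h_t(u+v)-h_t(u)-h_t(v)$, which gives
\[
D_t(\lambda u,\lambda v)=\lambda D_{t/\lambda}(u,v),
\qquad D_t(u,v):=h_t(u+v)-h_t(u)-h_t(v).
\]
If $u=v=0$ there is nothing to prove. Otherwise take $\lambda=|u|+|v|$. Since $t>0$ is arbitrary, it suffices to show $|D_t(u,v)|\le 3/\mathrm e$ for every $t>0$ whenever $|u|+|v|=1$.

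\emph{Bounded case.} Assume $|u|+|v|=1$, so that $|u|,|v|,|u+v|\le1$.

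If $t\ge1$, then $\max\{|z|,t\}=t$ for all three arguments. Hence $h_t(z)=z\log t$ is linear on them and $D_t(u,v)=0$.

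If $t<1$, I claim $|h_t(z)|\le 1/\mathrm e$ for every $|z|\le1$.
\begin{itemize}
\item When $t\le|z|\le1$, we get $|h_t(z)|=|z|\log(1/|z|)\le 1/\mathrm e$, since $s\log(1/s)$ is maximized on $(0,1]$ at $s=1/\mathrm e$.
\item When $|z|<t$, we get $|h_t(z)|=|z|\log(1/t)\le t\log(1/t)\le 1/\mathrm e$.
\end{itemize}
The triangle inequality applied to the three terms then gives $|D_t(u,v)|\le 3/\mathrm e$. This is the step where the constant $3/\mathrm e$ arises.

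\emph{Sequence version.} For $x,y\in\ell_1$, linearity of $S$ and the absolute convergence of the series defining $F_t$ give
\[
F_t(x+y)-F_t(x)-F_t(y)=\sum_n D_t(x_n,y_n)-D_t\big(S(x),S(y)\big).
\]
Applying \eqref{eq:scalar-defect} to each summand bounds the first part by $\frac3{\mathrm e}(\norm{x}_1+\norm{y}_1)$. Since $|S(x)|\le\norm{x}_1$ and $|S(y)|\le\norm{y}_1$, the second part is also bounded by $\frac3{\mathrm e}(\norm{x}_1+\norm{y}_1)$. Adding the two bounds gives $L=6/\mathrm e$.

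\emph{Main obstacle.} The only genuinely delicate point is the behaviour of the logarithm near the cancellation $v\approx-u$. There a direct term-by-term comparison of the logarithms blows up, and this is why the scaling reduction comes first. After rescaling, the truncation level only enters through the elementary bound $t\log(1/t)\le1/\mathrm e$, and no case analysis comparing $|u+v|$ with $|u|$ is needed.
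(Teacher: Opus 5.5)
Your proposal is correct and follows essentially the same route as the paper. You rescale by $R=|u|+|v|$ using the identity $h_t(\lambda z)=\lambda h_{t/\lambda}(z)+\lambda z\log\lambda$, note that the defect vanishes when the truncation level is at least $R$, and otherwise bound each of the three terms by $1/\mathrm e$ using $r|\log r|\le 1/\mathrm e$. You then sum coordinatewise and add the total-sum term via $|S(x)|\le\norm{x}_1$ to get $L=6/\mathrm e$.
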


\begin{proof}
Put $R=|u|+|v|$. The case $R=0$ is immediate. If $t\ge R$, all
three values in \eqref{eq:scalar-defect} are given by multiplication
by $\log t$, and the defect is zero. Otherwise, let $s=t/R\in(0,1)$.
The identity
\[
 h_t(Rz)=R h_s(z)+Rz\log R
\]
reduces the estimate to the unit disk, since the terms involving
$\log R$ cancel. For $|z|\le1$ we have
\[
 |z\log\max\{|z|,s\}|\le\frac1{\mathrm e}.
\]
Indeed, this follows from $r|\log r|\le1/\mathrm e$ for $0<r\le1$
when $|z|\ge s$, and from
$|z||\log s|\le s|\log s|\le1/\mathrm e$ otherwise. Apply this
bound to $u/R$, $v/R$, and $(u+v)/R$ to obtain
\eqref{eq:scalar-defect}.

Apply \eqref{eq:scalar-defect} to each coordinate of $x,y$ and to
$S(x),S(y)$. Summation, together with $|S(x)|\le\norm{x}_1$, gives
\eqref{eq:defect}.
\end{proof}

Fix $\theta>0$. On all of $\ell_1$, initially allowing infinite values,
put
\begin{equation}\label{eq:space-norm}
 \nt{x}=\norm{x}_1+\theta H(x),\qquad
 Y=\{x\in\ell_1:H(x)<\infty\}.
\end{equation}
The symbol $\norm{\cdot}_\theta$ denotes the quasi-norm of our space.
It is used to measure both approximation errors and competitor errors;
$H$ is only the functional entering its definition.

\begin{proposition}\label{prop:quasinorm}
The set $Y$ is a vector space, and $\norm{\cdot}_\theta$ is a
quasi-norm on $Y$. For $x,y\in Y$,
\begin{equation}\label{eq:refined-triangle}
 \nt{x+y}\le\nt{x}+\nt{y}
                 +\theta L(\norm{x}_1+\norm{y}_1).
\end{equation}
In particular,
\begin{equation}\label{eq:quasi-triangle}
 \nt{x+y}\le\kappa_\theta(\nt{x}+\nt{y}),
       \qquad \kappa_\theta=1+\theta L.
\end{equation}
\end{proposition}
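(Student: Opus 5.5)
The proof rests on Lemma~\ref{lem:defect} applied pointwise in $t$, followed by taking the supremum over $t$.

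First, homogeneity. I will show that $H(\lambda x)=|\lambda|H(x)$ for every scalar $\lambda\neq0$. With $R=|\lambda|$ and $\omega=\lambda/|\lambda|$, we have $h_t(\lambda z)=\omega\, h_{t/R}(Rz\,\bar\omega)\cdots$. More cleanly: $h_t(\lambda z)=\lambda z\log\max\{|\lambda||z|,t\}=\lambda\bigl(z\log\max\{|z|,t/|\lambda|\}+z\log|\lambda|\bigr)=\lambda h_{t/|\lambda|}(z)+\lambda z\log|\lambda|$. Summing over coordinates and subtracting the same identity for $S(\lambda x)=\lambda S(x)$, the linear terms $\lambda\log|\lambda|\bigl(\sum_n x_n-S(x)\bigr)$ cancel. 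This gives $F_t(\lambda x)=\lambda F_{t/|\lambda|}(x)$. Since $t\mapsto t/|\lambda|$ is a bijection of $(0,\infty)$, taking the supremum yields $H(\lambda x)=|\lambda|H(x)$, and the case $\lambda=0$ is trivial. Hence $\nt{\lambda x}=|\lambda|\nt{x}$, and $Y$ is closed under scalar multiplication.

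Next, the refined triangle inequality. For $x,y\in\ell_1$ and each fixed $t$, \eqref{eq:defect} gives
\[
 |F_t(x+y)|\le|F_t(x)|+|F_t(y)|+L(\norm{x}_1+\norm{y}_1)\le H(x)+H(y)+L(\norm{x}_1+\norm{y}_1).
\]
Taking the supremum over $t>0$ gives $H(x+y)\le H(x)+H(y)+L(\norm{x}_1+\norm{y}_1)$. In particular $x+y\in Y$ whenever $x,y\in Y$, so $Y$ is a vector space. Multiplying this bound by $\theta$ and adding $\norm{x+y}_1\le\norm{x}_1+\norm{y}_1$ gives \eqref{eq:refined-triangle}. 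Because $\norm{x}_1\le\nt{x}$, the extra term is at most $\theta L(\nt{x}+\nt{y})$, which gives \eqref{eq:quasi-triangle} with $\kappa_\theta=1+\theta L$.

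Finally, positive definiteness and finiteness. Since $H\ge0$, we have $\nt{x}\ge\norm{x}_1$, so $\nt{x}=0$ forces $x=0$. Also $\nt{x}<\infty$ on $Y$ by the definition of $Y$, and $H(0)=0$. None of these steps is a real obstacle. The only point needing care is the rescaling identity in the homogeneity step, which is essentially the computation $h_t(Rz)=Rh_s(z)+Rz\log R$ already used in the proof of Lemma~\ref{lem:defect}, now extended to a complex unimodular factor. That extension works because $|\omega z|=|z|$.
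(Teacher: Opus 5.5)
Your proof is correct and follows the paper's argument essentially line for line. You use the rescaling identity $h_t(\lambda z)=\lambda h_{t/|\lambda|}(z)+\lambda z\log|\lambda|$ for homogeneity, the supremum over $t$ of \eqref{eq:defect} for the refined triangle inequality and closure under addition, and $\nt{x}\ge\norm{x}_1$ for both the constant $\kappa_\theta$ and positive definiteness; the only blemish is the abandoned fragment $\omega\,h_{t/R}(Rz\bar\omega)\cdots$ at the start of the homogeneity step, which you should simply delete.
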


\begin{proof}
For $\lambda\ne0$,
\[
 h_t(\lambda z)=\lambda h_{t/|\lambda|}(z)
                         +\lambda z\log|\lambda|.
\]
The linear terms cancel when forming $F_t$, whence
\[
 F_t(\lambda x)=\lambda F_{t/|\lambda|}(x),\qquad
 H(\lambda x)=|\lambda|H(x).
\]
Thus $Y$ is stable under scalar multiplication and
$\nt{\lambda x}=|\lambda|\nt{x}$. The assertion for $\lambda=0$
is immediate. Taking the supremum in \eqref{eq:defect} gives
\[
 H(x+y)\le H(x)+H(y)+L(\norm{x}_1+\norm{y}_1).
\]
Hence $Y$ is stable under addition. Adding the triangle inequality
in $\ell_1$ proves \eqref{eq:refined-triangle}; the inequality
$\norm{x}_1\le\nt{x}$ then gives \eqref{eq:quasi-triangle}.
Finally, $\nt{x}\ge\norm{x}_1$ proves positive definiteness.
\end{proof}

\begin{lemma}\label{lem:lsc}
For each $t>0$, the map $F_t:\ell_1\to\F$ is continuous. The
extended-valued functions $H$ and $\norm{\cdot}_\theta$ are lower
semicontinuous for the $\ell_1$-norm. In particular, if
$u^{(j)}\to u$ in $\ell_1$, then
\begin{equation}\label{eq:lsc}
 \nt{u}\le\liminf_{j\to\infty}\nt{u^{(j)}}.
\end{equation}
\end{lemma}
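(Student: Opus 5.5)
We will first establish continuity of each $F_t$ on $\ell_1$, and then derive the lower semicontinuity of $H$ and of $\nt{\cdot}$ as formal consequences.

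\emph{Continuity of $F_t$.} Fix $t>0$ and let $u^{(j)}\to u$ in $\ell_1$, so that $M=\sup_j\norm{u^{(j)}}_1$ is finite and also bounds $\norm{u}_1$. The scalar map $h_t$ is continuous on $\F$: away from $|z|=t$ it is given by $z\log t$ or $z\log|z|$, and the two formulas agree on $|z|=t$. We will use a Lipschitz-type bound on the disk $|z|\le M$:
\[
 |h_t(z)-h_t(w)|\le C_{t,M}|z-w|,
\]
obtained from the fact that $\log\max\{|z|,t\}$ is bounded by $\max\{|\log t|,|\log M|\}$ and is Lipschitz in $|z|$ with constant $1/t$. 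Precisely, we have $|h_t(z)-h_t(w)|\le|z-w|\,|\log\max\{|z|,t\}|+|w|\,\bigl|\log\max\{|z|,t\}-\log\max\{|w|,t\}\bigr|$, and the second term is at most $(M/t)|z-w|$. Summing over coordinates then gives
\[
 \Bigl|\sum_n h_t(u^{(j)}_n)-\sum_n h_t(u_n)\Bigr|\le C_{t,M}\norm{u^{(j)}-u}_1\to0,
\]
and the sum term is handled the same way. Since $S$ is $\ell_1$-continuous and $h_t$ is continuous, $h_t(S(u^{(j)}))\to h_t(S(u))$. It follows that $F_t(u^{(j)})\to F_t(u)$. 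The only delicate point in this step is making the Lipschitz constant uniform in $n$ and $j$, and that is exactly what the $\ell_1$ bound $M$ provides.

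\emph{Lower semicontinuity.} The function $H=\sup_t|F_t|$ is a pointwise supremum of continuous real-valued functions, so it is lower semicontinuous as an extended-valued map. Concretely, for any $t$ we have $|F_t(u)|=\lim_j|F_t(u^{(j)})|\le\liminf_j H(u^{(j)})$, and taking the supremum over $t$ gives $H(u)\le\liminf_j H(u^{(j)})$. Since $\norm{\cdot}_1$ is continuous and $\theta>0$, we obtain
\[
 \nt{u}=\norm{u}_1+\theta H(u)\le\lim_j\norm{u^{(j)}}_1+\theta\liminf_j H(u^{(j)})=\liminf_j\nt{u^{(j)}},
\]
which is \eqref{eq:lsc}. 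This remains valid when some of the values involved are infinite.
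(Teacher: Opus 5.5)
Your proof is correct and follows essentially the same route as the paper: you prove a Lipschitz bound for $h_t$ on a disk whose radius is an $\ell_1$ bound, sum it over the coordinates and the total sum, and then use that a supremum of continuous functions is lower semicontinuous. The only difference is cosmetic. You bound the term $w\bigl(\log\max\{|z|,t\}-\log\max\{|w|,t\}\bigr)$ by $(M/t)|z-w|$, using that $r\mapsto\log\max\{r,t\}$ is $1/t$-Lipschitz, whereas the paper shows the sharper bound $|z|-|w|\le|z-w|$; since $t$ is fixed, either bound suffices.
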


\begin{proof}
Fix $t,R>0$. The scalar function $h_t$ is Lipschitz on the disk
$\{z:|z|\le R\}$ with constant
\[
 K_{t,R}=1+\max\{|\log t|,|\log\max\{R,t\}|\}.
\]
To verify this, take $|z|=r\ge s=|w|$ and use
\[
 h_t(z)-h_t(w)
  =(z-w)\log\max\{r,t\}
       +w\log\frac{\max\{r,t\}}{\max\{s,t\}}.
\]
The second term has modulus at most $r-s\le|z-w|$. In fact,
\[
 s\log\frac{\max\{r,t\}}{\max\{s,t\}}\le r-s:
\]
for $s\ge t$ this follows from $\log a\le a-1$; for $s<t<r$ it
follows from $s\log(r/t)\le s(r-t)/t\le r-t\le r-s$; and for
$r\le t$ the left side vanishes. This also covers $s=0$.

If $\norm{x}_1,\norm{y}_1\le R$, the coordinates of $x,y$ and the
scalars $S(x),S(y)$ belong to that disk. Therefore
\begin{align*}
 |F_t(x)-F_t(y)|
 &\le\sum_n|h_t(x_n)-h_t(y_n)|
                +|h_t(S(x))-h_t(S(y))|\\
 &\le2K_{t,R}\norm{x-y}_1.
\end{align*}
This proves continuity for fixed $t$; no uniform bound in $t$ is
needed. Since $H$ is a supremum of continuous functions $|F_t|$, it
is lower semicontinuous. The same holds for
$\norm{\cdot}_1+\theta H$, proving \eqref{eq:lsc}, including its
extended-valued formulation.
\end{proof}

\begin{proposition}\label{prop:complete}
The space $(Y,\norm{\cdot}_\theta)$ is complete.
\end{proposition}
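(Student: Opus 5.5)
To prove Proposition~\ref{prop:complete}, I will follow the standard route for quasi-normed spaces that sit continuously inside a complete space and carry a lower semicontinuous quasi-norm. Since $\norm{x}_1\le\nt{x}$, a $\norm{\cdot}_\theta$-Cauchy sequence is $\ell_1$-Cauchy, so it has an $\ell_1$-limit. Lemma~\ref{lem:lsc} then controls the $\norm{\cdot}_\theta$-distance to that limit. Note that a Cauchy sequence with a convergent subsequence converges in any quasi-normed space, by the quasi-triangle inequality \eqref{eq:quasi-triangle}; I nonetheless plan to argue with the whole sequence directly.

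Let $(x^{(j)})\subset Y$ be Cauchy for $\norm{\cdot}_\theta$.

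First, since $\norm{\cdot}_1\le\norm{\cdot}_\theta$, the sequence is Cauchy in $\ell_1$. Hence there is $x\in\ell_1$ with $x^{(j)}\to x$ in $\ell_1$.

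Second, fix $\varepsilon>0$ and choose $N$ such that $\nt{x^{(j)}-x^{(k)}}\le\varepsilon$ for all $j,k\ge N$. Fix $j\ge N$ and let $k\to\infty$. Then $x^{(j)}-x^{(k)}\to x^{(j)}-x$ in $\ell_1$. By the lower semicontinuity \eqref{eq:lsc} of the extended-valued quasi-norm on all of $\ell_1$,
\[
\nt{x^{(j)}-x}\le\liminf_{k\to\infty}\nt{x^{(j)}-x^{(k)}}\le\varepsilon .
\]
In particular $H(x^{(j)}-x)<\infty$, so $x^{(j)}-x\in Y$.

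Third, $Y$ is a vector space by Proposition~\ref{prop:quasinorm}, so $x=x^{(j)}-(x^{(j)}-x)\in Y$. The previous bound then says exactly that $\nt{x^{(j)}-x}\le\varepsilon$ for all $j\ge N$, that is, $x^{(j)}\to x$ in $(Y,\norm{\cdot}_\theta)$.

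The only delicate point is in the second step: I must apply \eqref{eq:lsc} before knowing that the limit lies in $Y$. This is legitimate because Lemma~\ref{lem:lsc} was stated for the extended-valued functional on all of $\ell_1$. Membership in $Y$ is then deduced afterwards, through closure of $Y$ under subtraction from Proposition~\ref{prop:quasinorm}, rather than assumed.
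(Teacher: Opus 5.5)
Your proposal is correct and follows the paper's proof essentially step for step. Both arguments take the $\ell_1$-limit and apply the extended-valued lower semicontinuity of Lemma~\ref{lem:lsc} to $x^{(j)}-x^{(k)}\to x^{(j)}-x$. Both then deduce $x^{(j)}-x\in Y$, hence $x\in Y$, and finally $\nt{x^{(j)}-x}\le\varepsilon$ for large $j$.
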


\begin{proof}
Let $(x^{(j)})_{j\ge1}$ be a Cauchy sequence in
$(Y,\norm{\cdot}_\theta)$. Since
$\|u\|_1\le\|u\|_\theta$, the sequence is Cauchy in $\ell_1$ and hence
there is $x\in\ell_1$ such that $x^{(j)}\to x$ in $\ell_1$.

Fix $j$. Applying the lower semicontinuity from Lemma~\ref{lem:lsc} to
$x^{(j)}-x^{(k)}\to x^{(j)}-x$ in $\ell_1$ gives
\[
 \|x^{(j)}-x\|_\theta
 \le \liminf_{k\to\infty}\|x^{(j)}-x^{(k)}\|_\theta.
\]
In particular, the right-hand side is finite, so $x^{(j)}-x\in Y$ and,
since $x^{(j)}\in Y$, also $x\in Y$. Given $\varepsilon>0$, choose $j_0$
such that
$\|x^{(j)}-x^{(k)}\|_\theta<\varepsilon$ whenever $j,k\ge j_0$.
For $j\ge j_0$, the preceding inequality yields
\[
             \|x^{(j)}-x\|_\theta\le\varepsilon.
\]
Thus $x^{(j)}\to x$ in $\|\cdot\|_\theta$, and $Y$ is complete.
\end{proof}

Every finitely supported vector belongs to $Y$: its function $F_t$
is constant for sufficiently small $t$, vanishes for sufficiently
large $t$, and is continuous in between. We may therefore define
\begin{equation}\label{eq:X}
 X=\overline{c_{00}}^{\,\norm{\cdot}_\theta}\subset Y.
\end{equation}

\begin{proposition}\label{prop:basis}
The space $(X,\norm{\cdot}_\theta)$ is quasi-Banach. Its canonical
vectors, together with the coordinate functionals $e_n^*(x)=x_n$,
form a Markushevich basis satisfying
\[
 \nt{e_n}=\norm{e_n^*}=1\qquad(n\in\N).
\]
Moreover, coordinate permutations act isometrically on $X$ and, also, all $\theta>0$
give the same space $X$ and equivalent quasi-norms.
\end{proposition}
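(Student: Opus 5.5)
We will check each assertion of Proposition~\ref{prop:basis} in turn, relying on Propositions~\ref{prop:quasinorm} and~\ref{prop:complete}.

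\emph{Quasi-Banach.} $X$ is by definition a closed subspace of the complete quasi-normed space $(Y,\nt{\cdot})$ from Proposition~\ref{prop:complete}. Hence it is quasi-Banach.

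\emph{Norms of $e_n$ and $e_n^*$.} For $x=e_n$ we have $S(e_n)=1$, so
\[
F_t(e_n)=h_t(1)-h_t(1)=0
\]
for every $t>0$. Thus $H(e_n)=0$ and $\nt{e_n}=1$. For the functionals,
\[
|e_n^*(x)|=|x_n|\le\norm{x}_1\le\nt{x},
\]
so $\norm{e_n^*}\le1$, and $e_n^*(e_n)=1$ gives equality. Since $e_n^*$ is linear and bounded, it is continuous. Biorthogonality is obvious.

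\emph{Markushevich basis.} Fundamentality holds by construction, because $X$ is the closure of $c_{00}=\spn\{e_n\}$. Totality holds because $X\subset\ell_1$: a vector all of whose coordinates vanish is zero.

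\emph{Permutation invariance.} Let $\pi$ be a bijection of $\N$ and set $(x\circ\pi)_n=x_{\pi(n)}$. Then $\norm{\cdot}_1$ and $S$ are unchanged. The series $\sum_n h_t(x_n)$ is absolutely convergent, so it may be rearranged. Hence $F_t(x\circ\pi)=F_t(x)$, so $H$ and $\nt{\cdot}$ are invariant on $Y$. The map $x\mapsto x\circ\pi$ is therefore an isometric bijection of $Y$ preserving $c_{00}$, so it maps the closure $X$ onto itself isometrically.

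\emph{Independence of $\theta$.} For $\theta\le\theta'$,
\[
\nt{x}\le\norm{x}_1+\theta' H(x)\le\frac{\theta'}{\theta}\,\nt{x},
\]
since $\norm{x}_1\le(\theta'/\theta)\norm{x}_1$. So the quasi-norms are pointwise equivalent on all of $\ell_1$, even with infinite values allowed. Consequently $Y$ does not depend on $\theta$. The closure of $c_{00}$ under equivalent quasi-norms is the same set, so $X$ does not depend on $\theta$ either.

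The only point needing care is that the closure in~\eqref{eq:X} is taken inside $Y$ with a quasi-norm rather than a norm. Equivalence of quasi-norms gives the same Cauchy sequences and the same limits, so this causes no difficulty. The step I would check most carefully is rearranging the series inside $F_t$. It is justified by the absolute convergence already noted before Lemma~\ref{lem:defect}.
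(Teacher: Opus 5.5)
Your proof is correct and follows the paper's argument essentially step by step. The shared steps are: $X$ is a closed subspace of the complete space $Y$; $H(e_n)=0$; the bound $|x_n|\le\norm{x}_1\le\nt{x}$; invariance of $\norm{\cdot}_1$, $S$ and each $F_t$ under coordinate permutations; and the two-sided comparison $\nt{x}\le\norm{x}_{\theta'}\le(\theta'/\theta)\nt{x}$ for $\theta\le\theta'$. The paper additionally records $\nt{u}\le(1+\theta L(|A|-1))\norm{u}_1$ on $E_A$ to note that $E_A$ carries its usual finite-dimensional topology, but the statement itself does not need this.
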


\begin{proof}
By Proposition~\ref{prop:complete}, $X$ is a closed subspace of a
complete quasi-normed space, hence is complete. Since $H(e_n)=0$,
we have $\nt{e_n}=1$. Also,
\[
 |e_n^*(x)|\le\norm{x}_1\le\nt{x}.
\]
This proves continuity and $\norm{e_n^*}\le1$; evaluation at $e_n$
gives equality. Biorthogonality is immediate, density of the linear
span is part of \eqref{eq:X}, and the coordinate functionals separate
points because $X$ is a subspace of $\ell_1$. For a nonempty finite
$A$, repeated application of \eqref{eq:defect} also gives
\[
 \norm{u}_1\le\nt{u}
 \le\bigl(1+\theta L(|A|-1)\bigr)\norm{u}_1
       \qquad(u\in E_A).
\]
Indeed, each one-coordinate vector has defect zero, and every partial
sum has $\ell_1$-norm at most $\norm{u}_1$. Thus the topology on
$E_A$ is the usual finite-dimensional topology.

Every coordinate permutation preserves $\norm{\cdot}_1$, $S$, and
each $F_t$, and maps $c_{00}$ onto itself. It therefore acts
isometrically on $X$. Finally, for $\theta,\eta>0$,
\begin{equation}\label{eq:equivalent}
 \min\{1,\theta/\eta\}\norm{x}_\eta
 \le\nt{x}\le\max\{1,\theta/\eta\}\norm{x}_\eta.
\end{equation}
Thus the closures in \eqref{eq:X} agree as sets and as topological
vector spaces.
\end{proof}

\section{Chebyshev approximation on greedy supports}\label{sec:semigreedy}

The vanishing of $H$ on disjoint pairs of opposite coordinates is the
key to semi-greediness.

\begin{lemma}\label{lem:pairs}
Let $i_1,j_1,\ldots,i_k,j_k$ be distinct indices and
$c_1,\ldots,c_k\in\F$. If
\[
 d=\sum_{r=1}^k c_r(e_{i_r}-e_{j_r}),
\]
then $F_t(d)=0$ for every $t>0$, and
$\nt{d}=\norm{d}_1=2\sum_{r=1}^k|c_r|$.
\end{lemma}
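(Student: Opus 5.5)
The plan is a direct computation from the definitions; no earlier estimate beyond the oddness of $h_t$ is needed. First we compute the total sum. Since the indices are distinct, the coordinates of $d$ are exactly $d_{i_r}=c_r$ and $d_{j_r}=-c_r$ for $r=1,\ldots,k$, and $d_n=0$ for all other $n$. In particular $d\in c_{00}\subset\ell_1$ and
\[
 S(d)=\sum_{r=1}^k\bigl(c_r-c_r\bigr)=0 .
\]
Hence $h_t(S(d))=h_t(0)=0$ for every $t>0$.

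Next we use oddness. For each $t>0$ and $z\in\F$ we have $h_t(-z)=-z\log\max\{|-z|,t\}=-h_t(z)$, since the logarithmic factor depends only on $|z|$; the case $z=0$ is covered by the convention $h_t(0)=0$. The sum $\sum_n h_t(d_n)$ has only finitely many nonzero terms, and they group into the pairs $h_t(c_r)+h_t(-c_r)=0$. Therefore
\[
 F_t(d)=\sum_{r=1}^k\bigl(h_t(c_r)+h_t(-c_r)\bigr)-h_t(0)=0
\]
for every $t>0$. Taking the supremum over $t$ gives $H(d)=0$, so $d\in Y$ (indeed $d\in X$, as it is finitely supported), and $\nt{d}=\norm{d}_1+\theta\cdot0=\norm{d}_1$.

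Finally, distinctness of the indices means the supports of the $2k$ terms do not overlap. So $\norm{d}_1=\sum_r(|c_r|+|-c_r|)=2\sum_{r=1}^k|c_r|$.

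There is no real obstacle here. The one point to watch is that distinctness is used twice: to ensure the coordinates are exactly $\pm c_r$ with no accidental merging, which drives both the pairwise cancellation in $F_t$ and the exact $\ell_1$ computation, and to make $S(d)=0$ hold term by term. If some $c_r=0$, the corresponding terms simply vanish, and the argument is unaffected.
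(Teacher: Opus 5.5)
Your proof is correct and matches the paper's argument: oddness of $h_t$ cancels each pair $h_t(c_r)+h_t(-c_r)$, $S(d)=0$ removes the correction term, and disjointness of supports gives the $\ell_1$ value. One small quibble with your closing comment: $S(d)=0$ follows from linearity of $S$ even without distinctness, and distinctness is really needed only for the pairwise cancellation and the exact $\ell_1$ computation.
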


\begin{proof}
The function $h_t$ is odd. Thus the contributions of each pair cancel
in $\sum_nh_t(d_n)$, while $S(d)=0$.
\end{proof}

For clarity, write $\sigma_m^\theta$ and $\mathcal E_A^\theta$ for
the approximation errors measured in $\norm{\cdot}_\theta$.

\begin{lemma}\label{lem:minimum}
For every $x\in X$ and every finite $A\subset\N$, there exists
$\mathcal C_A^\theta(x)\in E_A$ such that
\[
 \mathcal E_A^\theta(x)
   =\min_{(a_n)_{n\in A}\in\F^A}
           \nt{x-\sum_{n\in A}a_ne_n}
   =\nt{x-\mathcal C_A^\theta(x)}.
\]
\end{lemma}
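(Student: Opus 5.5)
The plan is a direct compactness argument in the finite-dimensional space $\F^A$, using the lower semicontinuity from Lemma~\ref{lem:lsc} in place of continuity. Fix $x\in X$ and a finite nonempty $A$; the case $A=\emptyset$ is trivial. Define $\Phi:\F^A\to[0,\infty)$ by $\Phi(a)=\nt{x-\sum_{n\in A}a_ne_n}$. The value is finite because $x\in X\subset Y$, $\sum a_ne_n\in c_{00}\subset Y$, and $Y$ is a vector space by Proposition~\ref{prop:quasinorm}. Let $\mathcal E=\inf_a\Phi(a)\le\Phi(0)=\nt{x}$.

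\textbf{Coercivity.} I first confine a minimizing sequence to a bounded set, using only the $\ell_1$ part of the quasi-norm:
\[
\Phi(a)\ge\norm{x-\textstyle\sum_{n\in A}a_ne_n}_1\ge\sum_{n\in A}|a_n|-\norm{x}_1 .
\]
Hence any $a$ with $\Phi(a)\le\nt{x}+1$ satisfies $\sum_{n\in A}|a_n|\le\nt{x}+1+\norm{x}_1\le 2\nt{x}+1$. Choose a minimizing sequence $a^{(j)}$ with $\Phi(a^{(j)})\to\mathcal E$. From some index on it lies in this compact ball of $\F^A$, so after passing to a subsequence $a^{(j)}\to a$ in $\F^A$.

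\textbf{Lower semicontinuity.} Set $u^{(j)}=x-\sum_{n\in A}a^{(j)}_ne_n$ and $u=x-\sum_{n\in A}a_ne_n$. Then
\[
\norm{u^{(j)}-u}_1=\sum_{n\in A}|a^{(j)}_n-a_n|\to0 .
\]
By \eqref{eq:lsc}, $\Phi(a)=\nt{u}\le\liminf_j\nt{u^{(j)}}=\mathcal E$. Since also $\Phi(a)\ge\mathcal E$, we get $\Phi(a)=\mathcal E$. Setting $\mathcal C_A^\theta(x)=\sum_{n\in A}a_ne_n$ gives the claim.

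The only delicate point is that $\Phi$ need not be continuous, because $\norm{\cdot}_\theta$ is only a quasi-norm defined through a supremum. The argument avoids this entirely: it uses only $\ell_1$-lower semicontinuity together with the $\ell_1$ lower bound for coercivity. One should also check that convergence in $\F^A$ gives $\ell_1$ convergence of the $u^{(j)}$, which is immediate since they differ only on the finite set $A$.
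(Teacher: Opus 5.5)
Your proof is correct and follows essentially the same route as the paper: you bound a minimizing sequence in $\F^A$ using only the $\ell_1$ part of $\nt{\cdot}$, extract an $\ell_1$-convergent subsequence, and conclude with the $\ell_1$-lower semicontinuity of Lemma~\ref{lem:lsc}. Your extra checks (finiteness of $\Phi$ via $Y$ being a vector space, and the trivial case $A=\emptyset$) are details the paper leaves implicit.
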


\begin{proof}
Let $(z_k)\subset E_A$ be a minimizing sequence. Its errors are
bounded, and
\[
 \norm{z_k}_1\le\norm{x}_1+\norm{x-z_k}_1
               \le\norm{x}_1+\nt{x-z_k}.
\]
Hence the coefficient vectors of $z_k$ are bounded in the
finite-dimensional space $\F^A$. After passing to a subsequence,
$z_k\to z$ in $\ell_1$ for some $z\in E_A$. Lemma~\ref{lem:lsc}
gives
\[
 \nt{x-z}\le\liminf_{k\to\infty}\nt{x-z_k}
                  =\mathcal E_A^\theta(x).
\]
The reverse inequality follows from the definition of the infimum.
\end{proof}

\begin{theorem}\label{thm:sg}
The canonical Markushevich basis of $(X,\norm{\cdot}_\theta)$ is
semi-greedy. For every $x\in X$, every $m\ge1$, and every greedy
set $A$ of $x$ with $|A|=m$,
\begin{equation}\label{eq:sg-bound}
 \min_{(a_n)_{n\in A}\in\F^A}
       \nt{x-\sum_{n\in A}a_ne_n}
 \le3(1+\theta L)\sigma_m^\theta(x),\qquad L=6/\mathrm e.
\end{equation}
\end{theorem}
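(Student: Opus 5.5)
The plan is to compare the Chebyshev error on the greedy set $A$ with an arbitrary $m$-term competitor. We move the part of the competitor that lies off $A$ onto $A$ using balanced pairs, which cost nothing extra by Lemma~\ref{lem:pairs}. Fix $x\in X$, a greedy set $A$ with $|A|=m$, and a competitor $z=\sum_{n\in B}z_ne_n$ with $B$ finite and $|B|\le m$. It suffices to find $w\in E_A$ with
\[
\nt{x-w}\le 3(1+\theta L)\nt{x-z}.
\]
Taking the infimum over $z$ then gives the right-hand side of \eqref{eq:sg-bound}. Lemma~\ref{lem:minimum} says the left-hand side is a minimum, so there is nothing more to check.

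\emph{Step 1: the transfer.} Since $|B|\le|A|$, we have $|B\setminus A|\le|A\setminus B|$. Choose an injection $j\mapsto i(j)$ from $B\setminus A$ into $A\setminus B$, and set
\[
d=\sum_{j\in B\setminus A} z_j\bigl(e_j-e_{i(j)}\bigr),\qquad w=x-(x-z)-d .
\]
Check that $w\in E_A$ coordinatewise.
\begin{itemize}
\item For $n\notin A\cup B$, the vector $(x-z)+d$ has coordinate $x_n$.
\item For $j\in B\setminus A$, it has coordinate $x_j-z_j+z_j=x_j$.
\end{itemize}
Hence $x-w$ agrees with $x$ off $A$, that is, $w$ is supported in $A$. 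The indices $j$ and $i(j)$ are all distinct, so Lemma~\ref{lem:pairs} gives
\[
\nt{d}=\norm{d}_1=2\sum_{j\in B\setminus A}|z_j|.
\]

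\emph{Step 2: controlling $\sum_{j\in B\setminus A}|z_j|$.} This is the only place where greediness of $A$ enters, and I expect it to be the main point. Write $|z_j|\le|x_j-z_j|+|x_j|$. Since $j\notin A$ and $i(j)\in A$, \eqref{eq:greedy-set} gives $|x_j|\le|x_{i(j)}|$. Moreover $i(j)\notin B$, so $x_{i(j)}$ is exactly the $i(j)$-th coordinate of $x-z$. The map $i$ is injective and its range is disjoint from $B\setminus A$. Therefore
\[
\sum_{j\in B\setminus A}|z_j|\le\sum_{j\in B\setminus A}|(x-z)_j|+\sum_{j\in B\setminus A}|(x-z)_{i(j)}|\le\norm{x-z}_1 .
\]

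\emph{Step 3: assembling.} Apply the refined triangle inequality \eqref{eq:refined-triangle} to $x-w=(x-z)+d$:
\[
\nt{x-w}\le\nt{x-z}+2\norm{x-z}_1+\theta L\bigl(\norm{x-z}_1+2\norm{x-z}_1\bigr).
\]
Finally use $\norm{\cdot}_1\le\nt{\cdot}$. This gives $\nt{x-w}\le(3+3\theta L)\nt{x-z}=3(1+\theta L)\nt{x-z}$, as required. With $L=6/\mathrm e$ this matches the constant $3+18\theta/\mathrm e$ in Theorem~\ref{thm:main}.

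The delicate point is Step 2. One must use the refined inequality \eqref{eq:refined-triangle} rather than the crude quasi-triangle inequality, so that the defect term is charged only to $\ell_1$-norms. The bound there also has to be made entirely in terms of $\norm{x-z}_1$, which is what makes the constant linear in $\theta$.
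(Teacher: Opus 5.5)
Your proof is correct and follows the paper's argument. It uses the same injection $B\setminus A\to A\setminus B$, the same balanced-pair vector $d$ (your $w=z-d$ is exactly the paper's $z_y$), the same greedy estimate $\norm{d}_1\le2\norm{x-z}_1$, and the same application of \eqref{eq:refined-triangle}.

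One correction to your closing remark: the refined inequality is not needed for this constant. The plain quasi-triangle inequality \eqref{eq:quasi-triangle}, combined with $\nt{d}=\norm{d}_1\le2\nt{x-z}$, already gives $\nt{x-w}\le3(1+\theta L)\nt{x-z}$.
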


\begin{proof}
Fix $x,m,A$ as in the statement, and take an arbitrary competitor
\[
 y=\sum_{j\in B}b_je_j,\qquad |B|\le m.
\]
Put $r=x-y$. Since $|B\setminus A|\le|A\setminus B|$, choose an
injection $\pi:B\setminus A\to A\setminus B$. Move the
coefficients of $y$ outside $A$ to their corresponding positions in
$A\setminus B$ by defining
\[
 z_y=\sum_{j\in A\cap B}b_je_j
          +\sum_{j\in B\setminus A}b_je_{\pi(j)}\in E_A.
\]
The difference
\[
 d=y-z_y=\sum_{j\in B\setminus A}b_j(e_j-e_{\pi(j)})
\]
has disjoint pairs of opposite coordinates, so
$H(d)=0$ and $\nt{d}=\norm{d}_1$ by Lemma~\ref{lem:pairs}.

For $j\in B\setminus A$, greediness and $\pi(j)\notin B$ give
\[
 |x_j|\le|x_{\pi(j)}|=|r_{\pi(j)}|.
\]
As $b_j=x_j-r_j$, it follows that
\begin{align*}
 \norm{d}_1
 &=2\sum_{j\in B\setminus A}|b_j|\\
 &\le2\sum_{j\in B\setminus A}(|x_j|+|r_j|)\\
 &\le2\sum_{j\in B\setminus A}(|r_{\pi(j)}|+|r_j|)
 \le2\norm{r}_1.
\end{align*}
The last inequality uses injectivity of $\pi$ and disjointness of
$B\setminus A$ and $\pi(B\setminus A)$. Since $x-z_y=r+d$,
the refined triangle inequality \eqref{eq:refined-triangle} yields
\begin{align*}
 \nt{x-z_y}
 &\le\nt{r}+\norm{d}_1
                  +\theta L(\norm{r}_1+\norm{d}_1)\\
 &\le\nt{r}+(2+3\theta L)\norm{r}_1\\
 &\le3(1+\theta L)\nt{x-y}.
\end{align*}
Consequently,
\[
 \mathcal E_A^\theta(x)
      \le3(1+\theta L)\nt{x-y}
\]
for every competitor $y$ with at most $m$ terms. The left side is
independent of $y$, so taking the infimum on the right gives
$\mathcal E_A^\theta(x)\le3(1+\theta L)\sigma_m^\theta(x)$.
Lemma~\ref{lem:minimum} identifies the left side with the minimum
in \eqref{eq:sg-bound}. The constant is independent of $x$, $m$,
and the choice of greedy set, including all ties.
\end{proof}

\section{Failure of quasi-greediness and Schauder orderings}\label{sec:failure}

Given a semi-normalized M-basis $(e_n)$ and a finite set $A$, we write $\one_A:=\sum_{n\in A}e_n$.
Two elementary computations exhibit the different behavior of
positive and balanced constant-coefficient vectors. If $|A|=m\ge1$,
then
\[
 F_t(\one_A)
     =m\log\max\{1,t\}-m\log\max\{m,t\}.
\]
Its maximal modulus is $m\log m$, and hence
\begin{equation}\label{eq:positive}
 \nt{\one_A}=m(1+\theta\log m).
\end{equation}
For disjoint $A,B$ of cardinality $m$, Lemma~\ref{lem:pairs} gives
\begin{equation}\label{eq:balanced}
 \nt{\one_A-\one_B}=2m.
\end{equation}

\begin{theorem}\label{thm:notqg}
The canonical basis of $X$ is not quasi-greedy and therefore is not
almost greedy. This failure is witnessed by vectors having unique
greedy sets at the relevant orders.
\end{theorem}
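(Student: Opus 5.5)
We build explicit vectors for which the greedy projection is much larger in $\norm{\cdot}_\theta$ than the vector itself. The natural candidate combines \eqref{eq:positive} and \eqref{eq:balanced}. For $m\ge1$ take disjoint sets $A,B\subset\N$ with $|A|=|B|=m$, and set
\[
 x_m=(1+\varepsilon)\one_A-\one_B
\]
with a small fixed $\varepsilon>0$, say $\varepsilon=1/m$. The coefficients on $A$ are strictly larger in modulus than those on $B$, and all other coordinates vanish. Hence $A$ is the unique greedy set of order $m$ for $x_m$, which gives the uniqueness claim in the statement. The greedy projection is $P_A x_m=(1+\varepsilon)\one_A$. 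By \eqref{eq:positive} its norm is $(1+\varepsilon)m(1+\theta\log m)$, and the residual $x_m-P_Ax_m=-\one_B$ has norm $m(1+\theta\log m)$.

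Next we bound $\nt{x_m}$ from above, uniformly by a multiple of $m$. Write $x_m=(\one_A-\one_B)+\varepsilon\one_A$. We then apply the refined triangle inequality \eqref{eq:refined-triangle}, together with \eqref{eq:balanced} and \eqref{eq:positive}:
\[
 \nt{x_m}\le 2m+\varepsilon m(1+\theta\log m)+\theta L\bigl(2m+\varepsilon m\bigr).
\]
With $\varepsilon=1/m$ the middle term is $1+\theta\log m$, so $\nt{x_m}\le C_\theta m$ for a constant $C_\theta$ independent of $m$. It follows that
\[
 \frac{\nt{x_m-P_Ax_m}}{\nt{x_m}}\ge\frac{1+\theta\log m}{C_\theta}\longrightarrow\infty .
\]
This violates \eqref{eq:qg-def}, so the basis is not quasi-greedy. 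Almost greediness then fails as well, since almost greedy bases are quasi-greedy (the empty set is a competitor in \eqref{eq:sigmatilde}).

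The only delicate point is strict greediness. We need ties broken away so that $A$ is the unique greedy set of order $m$, and this forces the perturbation $\varepsilon$. We must therefore check that the perturbation does not destroy the cancellation. The refined inequality handles this cleanly, because the extra $H$-cost of $\varepsilon\one_A$ is only $\varepsilon m\theta\log m$, which is negligible once $\varepsilon\le 1/\log m$. The choice $\varepsilon=1/m$ is more than enough. Alternatively, one could compute $F_t(x_m)$ directly, but the triangle-inequality route avoids that computation.
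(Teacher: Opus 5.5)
Your proof is correct and is essentially the paper's argument. You use the same witnesses $(1+1/m)\one_{A_m}-\one_{B_m}$, with the same uniqueness and residual computations. The only difference is that you bound $\nt{x_m}\le 2m+1+\theta\log m+\theta L(2m+1)$ via \eqref{eq:refined-triangle}, whereas the paper computes $F_t(f_m)$ exactly to get $\nt{f_m}=2m+1+\theta(m+1)\log(1+1/m)$; your cruder $O(m)$ bound is already enough for the divergence.
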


\begin{proof}
For disjoint $A_m,B_m$ with $|A_m|=|B_m|=m$, let
\[
 f_m=\left(1+\frac1m\right)\one_{A_m}-\one_{B_m}.
\]
The set $A_m$ is the unique greedy set of $f_m$ of cardinality $m$,
and $f_m-P_{A_m}f_m=-\one_{B_m}$. Moreover, $S(f_m)=1$, so
\[
 F_t(f_m)=(m+1)\log
          \frac{\max\{1+1/m,t\}}{\max\{1,t\}}.
\]
This is nonnegative and reaches its maximum for $0<t\le1$.
Therefore
\begin{align*}
 \nt{f_m}&=2m+1+\theta(m+1)\log(1+1/m),\\
 \nt{f_m-P_{A_m}(f_m)}&=m(1+\theta\log m).
\end{align*}
It follows that
\begin{equation}\label{eq:divergence}
 \frac{\nt{f_m-P_{A_m}(f_m)}}{\nt{f_m}}
 =\frac{m(1+\theta\log m)}
        {2m+1+\theta(m+1)\log(1+1/m)}
 \longrightarrow\infty.
\end{equation}
Thus there is no uniform quasi-greedy estimate. Since almost
greediness implies quasi-greediness, it also fails.
\end{proof}

\begin{proposition}\label{prop:structure}
The space $X$ is not locally convex, and no ordering of its canonical
vectors is a Schauder basis.
\end{proposition}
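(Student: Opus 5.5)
Both claims will be derived from the two computations \eqref{eq:positive} and \eqref{eq:balanced}.

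\emph{Failure of local convexity.} Suppose $X$ admitted an equivalent norm $|\cdot|$, say $c\nt{x}\le|x|\le C\nt{x}$. Fix $m$ and a set $A$ with $|A|=m$. Split $\one_A$ as an average of balanced vectors. Pick a set $B$ disjoint from $A$ with $|B|=m$. Then
\[
 \one_A=\tfrac12\bigl(\one_A-\one_B\bigr)+\tfrac12\bigl(\one_A+\one_B\bigr),
\]
but this does not help directly. A cleaner route is to write $\one_A=\sum_{n\in A}e_n$ and apply the triangle inequality for $|\cdot|$. This gives $|\one_A|\le Cm$, while $|\one_A|\ge c\,m(1+\theta\log m)$. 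Letting $m\to\infty$ gives a contradiction. Hence $X$ is not locally convex; equivalently, the quasi-norm is not equivalent to any norm. This step is immediate.

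\emph{No Schauder ordering.} Fix any bijective ordering $\rho:\N\to\N$ and suppose $(e_{\rho(n)})$ were a Schauder basis. Then the partial sum projections $S_N=P_{\{\rho(1),\dots,\rho(N)\}}$ would be uniformly bounded on $X$ by the uniform boundedness principle, which is valid in $F$-spaces and hence in quasi-Banach spaces. I will contradict this with vectors of the form $\one_A-\one_B$, where all of $A$ precedes all of $B$ in the ordering $\rho$.

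Given $m$, let $A$ be the first $m$ indices $\rho(1),\dots,\rho(m)$ and $B=\{\rho(m+1),\dots,\rho(2m)\}$. Then $S_m(\one_A-\one_B)=\one_A$. By \eqref{eq:balanced} and \eqref{eq:positive},
\[
 \frac{\nt{S_m(\one_A-\one_B)}}{\nt{\one_A-\one_B}}=\frac{1+\theta\log m}{2}\to\infty .
\]
So $\sup_N\|S_N\|=\infty$, contradicting the uniform boundedness of the partial sums. Since $\rho$ was arbitrary, and the permutation invariance of Proposition~\ref{prop:basis} shows nothing depends on which indices are used, no ordering is a Schauder basis.

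The only point requiring care is the uniform boundedness principle in the non-locally-convex setting. I will invoke the Banach--Steinhaus theorem for complete metrizable topological vector spaces via Baire category: pointwise convergence of $S_N x$ implies pointwise boundedness, hence equicontinuity. For homogeneous operators, equicontinuity is uniform operator boundedness. Completeness of $X$ comes from Proposition~\ref{prop:basis}.
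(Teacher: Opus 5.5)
Your proof is correct and follows the paper's route. For non-local convexity you compare $\nt{\one_A}=m(1+\theta\log m)$ with a sum of $m$ unit vectors: you phrase this through an equivalent norm, using the standard fact that a locally convex quasi-Banach space is normable, while the paper works directly with a convex neighbourhood inside the unit ball. For the Schauder part you use the same test vectors $\one_{A_m}-\one_{B_m}$ to get $\norm{P_{A_m}}\ge(1+\theta\log m)/2$, and you cite the Banach--Steinhaus theorem for $F$-spaces where the paper runs the Baire argument by hand (using $\ell_1$-lower semicontinuity to make the sublevel sets closed); that citation is legitimate, since the theorem is stated topologically and does not need continuity of the quasi-norm.
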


\begin{proof}
Suppose that $X$ is locally convex. There is a convex neighborhood
$U$ of zero contained in $\{x:\nt{x}\le1\}$. Since $U$ is a
neighborhood, there is $\delta>0$ such that $\delta e_n\in U$ for
all $n$. Convexity gives $(\delta/m)\one_A\in U$ whenever
$|A|=m$. By \eqref{eq:positive}, this implies
\[
 \delta(1+\theta\log m)=\nt{(\delta/m)\one_A}\le1
       \qquad(m\ge1),
\]
which is impossible.

Now fix any ordering $(e_{\pi(n)})_{n\in\N}$ and put
$A_m=\{\pi(1),\ldots,\pi(m)\}$ and
$B_m=\{\pi(m+1),\ldots,\pi(2m)\}$. Equations
\eqref{eq:positive} and \eqref{eq:balanced} imply
\[
 \norm{P_{A_m}}
 \ge\frac{\nt{P_{A_m}(\one_{A_m}-\one_{B_m})}}
          {\nt{\one_{A_m}-\one_{B_m}}}
 =\frac{1+\theta\log m}{2}.
\]
The partial sum projections are thus not uniformly bounded. To
justify the implication in the quasi-Banach setting, suppose that
$P_{A_m}x\to x$ for every $x\in X$. Then $\sup_m\nt{P_{A_m}x}$
is finite for every $x$, and the closed sets
\[
 D_k=\{x\in X:\nt{P_{A_m}x}\le k\text{ for every }m\},
       \qquad k\in\N,
\]
cover $X$. Closedness follows from Lemma~\ref{lem:lsc} and continuity
of each finite coordinate projection into $\ell_1$. A quasi-Banach
space is completely metrizable (by Aoki--Rolewicz) and hence is a
Baire space. Some $D_k$ therefore contains a neighborhood of a point
$x_0$. Choose $\delta>0$ such that $x_0+u\in D_k$ whenever
$\nt{u}<\delta$. The quasi-triangle inequality gives
\[
 \nt{P_{A_m}u}\le
 \kappa_\theta\bigl(\nt{P_{A_m}(x_0+u)}+\nt{P_{A_m}x_0}\bigr)
 \le2\kappa_\theta k.
\]
Homogeneity would imply $\sup_m\norm{P_{A_m}}\le
4\kappa_\theta k/\delta$, a contradiction. Thus the chosen ordering
is not Schauder, and the ordering was arbitrary.
\end{proof}

\begin{proposition}\label{prop:pnorm}
For every $0<p<1$, the space $X$ admits an equivalent $p$-norm.
Under this renorming the canonical system remains a seminormalized,
$M$-bounded, semi-greedy Markushevich basis which is not quasi-greedy.
\end{proposition}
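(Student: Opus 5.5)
The plan is to use the freedom in $\theta$ given by Proposition~\ref{prop:basis}: all the quasi-norms $\norm{\cdot}_\theta$ define the same space $X$ and are equivalent by \eqref{eq:equivalent}. By \eqref{eq:quasi-triangle}, the modulus of concavity of $\norm{\cdot}_\theta$ is at most $\kappa_\theta=1+\theta L$, and this tends to $1$ as $\theta\to0$. Given $0<p<1$, I will choose $\theta>0$ so small that
\[
 1+\theta L\le 2^{1/p-1}.
\]
I then apply the quantitative Aoki--Rolewicz theorem: if a quasi-norm has modulus of concavity $\kappa\le 2^{1/p-1}$, then
\[
 |||x|||=\inf\Bigl\{\Bigl(\sum_{i=1}^N\nt{x_i}^p\Bigr)^{1/p}: x=\sum_{i=1}^N x_i\Bigr\}
\]
is a $p$-norm satisfying $c_p\nt{x}\le|||x|||\le\nt{x}$, with $c_p>0$ depending only on $p$ (for instance $c_p=4^{-1/p}$ in the standard proof). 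Combining this with \eqref{eq:equivalent}, $|||\cdot|||$ is equivalent to every $\norm{\cdot}_\eta$, so it is an equivalent $p$-norm on $X$.

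Next I transfer each property through the two-sided estimate $c\nt{x}\le|||x|||\le\nt{x}$, where $c=c_p$. The topology is unchanged, so completeness, fundamentality and totality of $(e_n,e_n^*)$ persist. Since $\nt{e_n}=1$, we get $c\le|||e_n|||\le1$, so the system is seminormalized. The dual norm satisfies $|e_n^*(x)|\le\nt{x}\le c^{-1}|||x|||$, so $\sup_n|||e_n|||\,\|e_n^*\|'\le c^{-1}$ and the system is $M$-bounded.

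For semi-greediness, let $\mathcal E'_A$ and $\sigma'_m$ denote the errors measured in $|||\cdot|||$. Since both are infima of equivalent quantities over the same sets, Theorem~\ref{thm:sg} gives
\[
 \mathcal E'_A(x)\le\mathcal E^\theta_A(x)\le 3(1+\theta L)\sigma^\theta_m(x)\le 3(1+\theta L)c^{-1}\sigma'_m(x)
\]
for every greedy set $A$ of $x$. Greedy sets are defined only through the coefficients, so they are the same for both quasi-norms. If one also wants the infimum to be attained, it suffices to note that $|||\cdot|||$ is continuous on the finite-dimensional space $E_A$ and that minimizing sequences are bounded in coefficients, exactly as in Lemma~\ref{lem:minimum}.

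For the failure of quasi-greediness, I use the vectors $f_m$ from Theorem~\ref{thm:notqg}. The ratio in \eqref{eq:divergence} changes by a factor of at most $c^{-1}$ when $\norm{\cdot}_\theta$ is replaced by $|||\cdot|||$, so it still diverges.

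The only genuinely delicate point is the first step: the statement is for every $p<1$, not merely for some $p$. This requires the sharp form of Aoki--Rolewicz, in which the exponent is determined by the modulus of concavity via $\kappa=2^{1/p-1}$. It also requires the observation that $\kappa_\theta\to1$, which is exactly where the parameter $\theta$ and the refined triangle inequality \eqref{eq:refined-triangle} are used.
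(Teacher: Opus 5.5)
Your proposal is correct and follows essentially the same route as the paper. Both choose $\theta$ so that $\kappa_\theta=1+\theta L\le 2^{1/p-1}$ (the paper takes equality, $\theta_p=(2^{1/p-1}-1)/L$), apply the sharp Aoki--Rolewicz theorem to obtain an equivalent $p$-norm, and transfer seminormalization, $M$-boundedness, semi-greediness (including attainment of the Chebyshev minimum) and the divergence in \eqref{eq:divergence} through the two-sided equivalence; you are only more explicit about the $p$-norm and the constant $c_p=4^{-1/p}$.
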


\begin{proof}
Fix $0<p<1$ and choose
\[
 \theta_p=\frac1L\left(2^{1/p-1}-1\right)>0.
\]
By \eqref{eq:quasi-triangle}, the quasi-norm
$\norm{\cdot}_{\theta_p}$ has concavity constant at most
$2^{1/p-1}$. By the Aoki--Rolewicz theorem, $X$ admits a $p$-norm
$\norm{\cdot}_{(p)}$ equivalent to $\norm{\cdot}_{\theta_p}$. Thus there are
constants $a_p,b_p>0$ such that
\[
 a_p\norm{x}_{\theta_p}\le\norm{x}_{(p)}
       \le b_p\norm{x}_{\theta_p},\qquad x\in X.
\]
By \eqref{eq:equivalent}, this $p$-norm is equivalent to every
$\norm{\cdot}_\theta$, $\theta>0$. In particular, the canonical vectors remain
seminormalized and the coordinate functionals remain uniformly bounded.

For completeness, suppose
$a\nt{x}\le\norm{x}_{(p)}\le b\nt{x}$, with $a,b>0$.
The greedy sets depend only on the fixed coordinate functionals, so
they are unchanged. Taking infima over the same competitors gives
\[
 \mathcal E_A^{(p)}(x)
 \le b\mathcal E_A^\theta(x)
 \le bC_{\mathrm{sg}}\sigma_m^\theta(x)
 \le\frac{b}{a}C_{\mathrm{sg}}\sigma_m^{(p)}(x).
\]
The Chebyshev infimum is attained for the $p$-norm: a minimizing
sequence is bounded in $E_A$, has a convergent subsequence there,
and $p$-norms are continuous. Thus this estimate gives
semi-greediness in the minimum formulation as well. The lower bound
by $a$ and upper bound by $b$ preserve the divergence in
\eqref{eq:divergence}. Completeness, density, separation by the
coordinate functionals, seminormalization, and $M$-boundedness are
also preserved by equivalence of quasi-norms.
\end{proof}

\begin{proof}[Proof of Theorem~\ref{thm:main}]
Proposition~\ref{prop:basis} supplies the space and the normalized,
$M$-bounded Markushevich basis, together with permutation invariance.
Theorem~\ref{thm:sg} gives semi-greediness with the stated constant,
whereas Theorem~\ref{thm:notqg} excludes quasi-greediness and almost
greediness. Proposition~\ref{prop:structure} proves nonlocal
convexity and the absence of a Schauder ordering. Finally,
Proposition~\ref{prop:pnorm} establishes the assertion for every
$0<p<1$.
\end{proof}

\section{Further remarks}\label{sec:further}

The construction has two additional features which help to explain why the usual Banach-space intuition is misleading here. First, distances to coordinate tails do not recover the quasi-norm. Second, although the coordinate functionals separate points, the bidual norm only sees the underlying $\ell_1$ part.

\begin{remark}\label{rem:tails}
The example also shows a concrete obstruction to recovering the
quasi-norm from distances to tails. Let
\[
 T_N=\overline{\spn\{e_n:n>N\}}^{\,\norm{\cdot}_\theta},
 \qquad q_\theta(u)=\sup_N\inf_{v\in T_N}\nt{u-v}.
\]
For $|A|=m\ge1$ we have
\[
 m\le q_\theta(\one_A)\le2m.
\]
For the lower bound, choose $N\ge\max A$. Every element of $T_N$
vanishes on $A$, because the coordinate functionals are continuous,
and $\norm{\cdot}_\theta\ge\norm{\cdot}_1$. For the upper bound,
given $N$, choose a set $B$ of cardinality $m$, disjoint from $A$,
whose indices exceed $N$, and apply \eqref{eq:balanced} to the
competitor $\one_B\in T_N$. On the other hand,
$\nt{\one_A}=m(1+\theta\log m)$. Hence no constant $C$ can satisfy
$\nt{u}\le Cq_\theta(u)$ for all $u\in X$.
\end{remark}

\begin{remark}\label{rem:bidual}
The canonical map into the bidual also loses the logarithmic term.
More precisely, for each $\theta>0$, the dual of
$(X,\norm{\cdot}_\theta)$ identifies isometrically with $\ell_\infty$
under the pairing
\[
 f_b(x)=\sum_n b_nx_n\qquad(b\in\ell_\infty,\ x\in X).
\]
Indeed, $|f_b(x)|\le\norm{b}_\infty\norm{x}_1
\le\norm{b}_\infty\nt{x}$, while evaluation at $e_n$ shows
$\norm{f_b}=\norm{b}_\infty$. Conversely, if $f\in X^*$, then
$b_n=f(e_n)$ is bounded, and $f=f_b$ by continuity and density of
$c_{00}$ in $X$. Hence the canonical map $J:X\to X^{**}$ satisfies
\[
 \norm{Jx}_{X^{**}}
 =\sup_{\norm{b}_\infty\le1}\left|\sum_n b_nx_n\right|
 =\norm{x}_1.
\]
In particular, $J$ is injective but is not an isomorphic embedding:
for $|A|=m$, we have $\norm{J\one_A}_{X^{**}}=m$, whereas
$\nt{\one_A}=m(1+\theta\log m)$. Thus separation by continuous
linear functionals does not provide a norming dual for the
quasi-norm used in the semi-greedy estimate.
\end{remark}

\textbf{Acknowledgment}. The author would like to thank Miguel Berasategui for several helpful discussions during the preparation of this paper.

\end{document}